\documentclass[12pt]{amsart}
\usepackage[margin=1in]{geometry}
\usepackage[T1]{fontenc}
\usepackage[utf8]{inputenc}
\usepackage{lmodern}
\usepackage{microtype}
\usepackage{amsmath,amssymb,amsthm,mathtools}
\usepackage{enumitem}
\usepackage{booktabs,array,longtable}
\usepackage{pifont}
\usepackage{xurl}
\usepackage[colorlinks=true,linkcolor=blue,citecolor=blue,urlcolor=blue]{hyperref}
\usepackage{hyphenat}

\newcommand{\R}{\mathbb{R}}
\newcommand{\C}{\mathbb{C}}
\newcommand{\Q}{\mathbb{Q}}
\newcommand{\Z}{\mathbb{Z}}
\newcommand{\N}{\mathbb{N}}

\newcommand{\Lzero}{L_0}
\newcommand{\Span}{\operatorname{span}}
\newcommand{\covol}{\operatorname{covol}}
\newcommand{\symp}{\sigma}
\newcommand{\lean}[1]{\nolinkurl{#1}}
\newcommand{\cmark}{\ding{51}}
\newcommand{\imark}{\(\triangleright\)}

\newtheorem{theorem}{Theorem}[section]

\theoremstyle{definition}
\newtheorem{definition}[theorem]{Definition}

\title[Lean-certified four-point HRT results]{Lean-certified four-point HRT results for three lattice points and one off-lattice point}
\author{Vignon Oussa}
\date{\today}
\subjclass[2020]{42C15, 22E27, 03B35}
\keywords{HRT conjecture, Gabor systems, time-frequency shifts, formal verification, Lean}

\begin{document}

\begin{abstract}
We record a Lean-certified theorem package for the four-point Heil--Ramanathan--Topiwala configuration
\[
\Lambda=\{0,a,b,\nu\}\subset \R^2,
\qquad \Lzero=\Z a+\Z b,
\qquad \nu=r a+s b,
\]
with $a$ and $b$ linearly independent. The principal certified theorem states that if $|\symp(a,b)|>1$ and $1,r,s$ are linearly independent over $\Q$, then for every nonzero $f\in L^2(\R)$ the four vectors
\[
f,\qquad \pi(a)f,\qquad \pi(b)f,\qquad \pi(\nu)f
\]
are linearly independent. A second certified theorem treats the rational-coordinate case $r,s\in \Q$, where the configuration lies in a finer full-rank lattice and linear independence follows from Linnell's theorem. The paper is written in standard mathematical prose. An appendix records the precise Lean certification ledger and the explicit analytic inputs used by the formal development and a download link is provided.
\end{abstract}

\maketitle

\section{Introduction}

Let
\[
T_xf(t)=f(t-x),
\qquad
M_{\omega}f(t)=e^{2\pi i\omega t}f(t),
\qquad
\pi(x,\omega)=M_{\omega}T_x,
\]
so that $\pi$ denotes the usual projective Schr\"odinger representation on $L^2(\R)$. The Heil--Ramanathan--Topiwala (HRT) conjecture asserts that for every nonzero $f\in L^2(\R)$ and every finite set of distinct points $\Lambda\subset\R^2$, the family $\{\pi(\lambda)f:\lambda\in\Lambda\}$ is linearly independent \cite{HRT}.

In this paper we consider the four-point configuration
\[
\Lambda=\{0,a,b,\nu\}\subset\R^2,
\qquad
\Lzero=\Z a+\Z b,
\qquad
\nu=r a+s b,
\]
where $a$ and $b$ are linearly independent. Our aim is to present, in conventional mathematical language, the theorem package that has been certified in Lean 4.

The certified content of the paper is as follows.

\begin{center}
\begin{tabular}{>{\centering\arraybackslash}p{0.08\textwidth} >{\raggedright\arraybackslash}p{0.82\textwidth}}
\toprule
Status & Certified statement \\
\midrule
\cmark & \textbf{Main theorem.} If $|\symp(a,b)|>1$ and $1,r,s$ are linearly independent over $\Q$, then for every nonzero $f\in L^2(\R)$ the four vectors $f$, $\pi(a)f$, $\pi(b)f$, and $\pi(\nu)f$ are linearly independent. \\
\addlinespace[0.35em]
\cmark & \textbf{Rational-coordinate theorem.} If $r,s\in\Q$, then $\Lambda$ is contained in a finer full-rank lattice; hence $\{\pi(\lambda)f:\lambda\in\Lambda\}$ is linearly independent for every nonzero $f\in L^2(\R)$. \\
\addlinespace[0.35em]
\cmark & \textbf{Supporting infrastructure.} The cyclic-subspace lemmas used in the proof of the main theorem are formalized in Lean and compiled without \lean{sorry}. \\
\bottomrule
\end{tabular}
\end{center}

The proof of the principal theorem is coordinate-free. It relies on standard time-frequency input: projective commutation and strong continuity of time-frequency shifts, irreducibility of the Schr\"odinger representation, the three-point HRT theorem, lattice linear independence, and the standard density/incompleteness theory for Gabor systems on lattices \cite{Grochenig,HRT,Linnell,RamanathanSteger}. In the formal development, these analytic ingredients are gathered into a single context object; the deduction of the certified theorems from that context is then formalized in Lean. The exact ledger is given in Appendix~\ref{app:ledger}.

We write
\[
\symp((x,\omega),(y,\eta))=x\eta-y\omega
\]
for the standard symplectic form on $\R^2$. For the lattice $\Lzero=\Z a+\Z b$ one has
\[
\covol(\Lzero)=|\symp(a,b)|.
\]
This is the quantity that enters the large-covolume hypothesis in the main theorem.

\section{Statement of the main theorem}

\begin{theorem}[Dense large-covolume case]
\label{thm:main}
Let $a,b\in\R^2$ be linearly independent, let $\nu=r a+s b$, and let $f\in L^2(\R)$ be nonzero. Assume that
\[
|\symp(a,b)|>1
\qquad\text{and}\qquad
1,r,s\ \text{are linearly independent over }\Q.
\]
Then the four vectors
\[
f,\qquad \pi(a)f,\qquad \pi(b)f,\qquad \pi(\nu)f
\]
are linearly independent over $\C$.
\end{theorem}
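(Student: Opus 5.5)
Suppose, for contradiction, that $c_0 f + c_a\pi(a)f + c_b\pi(b)f + c_\nu\pi(\nu)f = 0$ with not all coefficients zero. First dispose of degenerate cases. If $c_\nu=0$, this is a dependence among three time-frequency shifts, which the three-point HRT theorem forbids; by the same argument after translating the configuration by $-\nu$ and using projective commutation, every coefficient must be nonzero. So assume all four are nonzero and, after normalizing, write
\[
\pi(\nu)f = \alpha f + \beta\pi(a)f + \gamma\pi(b)f =: Pf,
\]
where $P=\alpha I+\beta\pi(a)+\gamma\pi(b)$ lies in the algebra generated by the lattice operators $\pi(\lambda)$, $\lambda\in\Lzero$.

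The key step is a cyclic-subspace argument. Let
\[
V=\overline{\Span}\{\pi(\lambda)f:\lambda\in\Lzero\}.
\]
Since $|\symp(a,b)|=\covol(\Lzero)>1$, the density theory for Gabor systems on lattices says that $\{\pi(\lambda)f\}_{\lambda\in\Lzero}$ is incomplete, so $V\neq L^2(\R)$. Also $V\neq\{0\}$ because $f\neq 0$. By projective commutation, $\pi(\lambda)\pi(\nu)=e^{2\pi i\symp(\lambda,\nu)}\pi(\nu)\pi(\lambda)$ up to the fixed cocycle. Hence
\[
\pi(\nu)\pi(\lambda)f = \text{(phase)}\cdot\pi(\lambda)Pf \in V.
\]
So $\pi(\nu)V\subseteq V$, and by the same relation read backwards (or by iterating and using closedness) $\pi(-\nu)V\subseteq V$ as well. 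Thus $V$ is invariant under $\pi(\lambda)$ for every $\lambda$ in the group $G=\Lzero+\Z\nu$, up to phases.

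Next, the hypothesis that $1,r,s$ are $\Q$-linearly independent means $\Z\nu+\Z a+\Z b$, viewed in coordinates $(r,s)$ modulo $\Z^2$, has $\{n(r,s)\bmod\Z^2\}$ dense in the torus (Kronecker). Hence $G$ is dense in $\R^2$. Strong continuity of $\lambda\mapsto\pi(\lambda)$ and closedness of $V$ then give $\pi(z)V\subseteq V$ for all $z\in\R^2$. This is valid because, for $v\in V$ and $g_k\to z$ with $g_k\in G$, we have $\pi(g_k)v\to\pi(z)v$ modulo phases, which are handled by passing to a subsequence on which the phases converge. So $V$ is a nonzero closed invariant subspace of the Schrödinger representation, and irreducibility forces $V=L^2(\R)$. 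This contradicts incompleteness.

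The main obstacle is the invariance step $\pi(\nu)V\subseteq V$ together with its inverse. Showing $\pi(\nu)$ maps $V$ into $V$ is clean, but getting $\pi(-\nu)V\subseteq V$, which is needed so that the generated set is a group rather than a semigroup, requires care. I would handle it without the inverse: the semigroup $\Lzero+\N\nu$ is already dense in $\R^2$, since $\{n(r,s)\}$ for $n\ge 1$ is dense mod $\Z^2$ by Weyl/Kronecker, and $\Lzero$ supplies all integer translates. So forward invariance alone suffices for the continuity argument. The second delicate point is the phase bookkeeping in the cocycle under limits, which I would absorb by noting that subspaces are invariant under scalars.
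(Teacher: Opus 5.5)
Your argument is correct and is essentially the paper's proof. The three-point HRT theorem forces $c_\nu\neq 0$, and projective commutation then gives $\pi(\nu)V\subseteq V$. Density of the forward semigroup $\Lzero+\N\nu$ together with strong continuity yields invariance under all of $\R^2$, and irreducibility then contradicts incompleteness of the lattice system when the covolume exceeds $1$.

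Your detours are unnecessary: only $c_\nu\neq 0$ is needed, not that all four coefficients are nonzero. The claimed backward invariance $\pi(-\nu)V\subseteq V$ does not follow from reading the relation backwards. Your fallback to the forward semigroup is exactly what the paper does.
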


This is the principal certified theorem in the Lean package. In the formal files it appears as \lean{hrt_dense_large_covolume}; the equivalent finite-family linear-independence formulation appears as \lean{hrt_dense_large_covolume_lindep}. Those names are recorded in Appendix~\ref{app:ledger}.

\section{Proof of the main theorem}

We begin with the lattice-cyclic subspace generated by $f$.

\begin{definition}
Let
\[
V=\overline{\Span_{\C}\{\pi(\ell)f:\ell\in\Lzero\}}.
\]
We call $V$ the lattice-cyclic subspace of $f$ associated with $\Lzero$.
\end{definition}

\begin{proof}[Proof of Theorem~\ref{thm:main}]
Assume, for contradiction, that there exist coefficients $c_0,c_1,c_2,c_3\in\C$, not all zero, such that
\begin{equation}
\label{eq:dependence}
c_0f+c_1\pi(a)f+c_2\pi(b)f+c_3\pi(\nu)f=0.
\end{equation}

If $c_3=0$, then \eqref{eq:dependence} is a nontrivial linear dependence among the three vectors $f$, $\pi(a)f$, and $\pi(b)f$. Since $0$, $a$, and $b$ are distinct and $a,b$ are linearly independent, this contradicts the three-point HRT theorem \cite{HRT}. Hence $c_3\neq 0$.

Because $0,a,b\in\Lzero$, we have
\[
f\in V,
\qquad
\pi(a)f\in V,
\qquad
\pi(b)f\in V.
\]
By rearranging \eqref{eq:dependence} we obtain
\[
\pi(\nu)f=-c_3^{-1}\bigl(c_0f+c_1\pi(a)f+c_2\pi(b)f\bigr)\in V.
\]

Next we claim that $\pi(\nu)V\subseteq V$. It is enough to verify this on the generators $\pi(\ell)f$ with $\ell\in\Lzero$. By projective commutation there exists, for each $\ell\in\Lzero$, a nonzero scalar $\kappa(\nu,\ell)$ such that
\[
\pi(\nu)\pi(\ell)f=\kappa(\nu,\ell)\,\pi(\ell)\pi(\nu)f.
\]
Since $\pi(\nu)f\in V$ and $V$ is invariant under every lattice shift $\pi(\ell)$, the right-hand side belongs to $V$. Therefore the left-hand side belongs to $V$ as well. By linearity and closure, $\pi(\nu)V\subseteq V$.

It follows that
\[
\pi(\ell+n\nu)V\subseteq V
\qquad\text{for all }\ell\in\Lzero\text{ and }n\in\N.
\]
Because $1,r,s$ are linearly independent over $\Q$, the forward semigroup $\Lzero+\N\nu$ is dense in $\R^2$ by Kronecker's theorem. Let $z\in\R^2$ and $h\in V$. Choose a sequence $z_k\in\Lzero+\N\nu$ with $z_k\to z$. Then $\pi(z_k)h\in V$ for every $k$, and strong continuity of $z\mapsto\pi(z)$ implies that $\pi(z_k)h\to\pi(z)h$. Since $V$ is closed, we obtain $\pi(z)h\in V$. Thus
\[
\pi(z)V\subseteq V
\qquad\text{for all }z\in\R^2.
\]

Now $V$ is nonzero because $f\in V$ and $f\neq 0$. Irreducibility of the Schr\"odinger representation therefore forces
\[
V=L^2(\R).
\]
On the other hand, $V$ is the closed span of the lattice Gabor orbit associated with $\Lzero$. Since
\[
\covol(\Lzero)=|\symp(a,b)|>1,
\]
standard density theory implies that this lattice Gabor orbit is incomplete in $L^2(\R)$ \cite{Grochenig,RamanathanSteger}. Hence $V\neq L^2(\R)$, a contradiction.

Therefore no nontrivial relation \eqref{eq:dependence} can exist, and the four vectors in the statement of the theorem are linearly independent.
\end{proof}

\section{The rational-coordinate case}

\begin{theorem}[Rational-coordinate case]
\label{thm:rational}
Let $a,b\in\R^2$ be linearly independent, let $\nu=r a+s b$, and suppose that $r,s\in\Q$. Then the configuration
\[
\Lambda=\{0,a,b,\nu\}
\]
is contained in a finer full-rank lattice. Consequently, for every nonzero $f\in L^2(\R)$, the family $\{\pi(\lambda)f:\lambda\in\Lambda\}$ is linearly independent.
\end{theorem}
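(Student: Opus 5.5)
The plan is to build an explicit finer lattice containing $\Lambda$ and then apply Linnell's theorem, which gives linear independence of $\{\pi(\lambda)f:\lambda\in\Lambda\}$ for any finite subset $\Lambda$ of a full-rank lattice in $\R^2$ and any nonzero $f\in L^2(\R)$ \cite{Linnell}.

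\textbf{Step 1 (common denominator).} Since $r,s\in\Q$, we can choose a positive integer $N$ with $Nr\in\Z$ and $Ns\in\Z$; for instance, $N$ can be the least common multiple of the denominators of $r$ and $s$ in lowest terms.

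\textbf{Step 2 (the finer lattice).} Define
\[
L_N=\Z\tfrac{a}{N}+\Z\tfrac{b}{N}.
\]
Because $a,b$ are linearly independent, so are $a/N$ and $b/N$. Hence $L_N$ is a full-rank lattice in $\R^2$, with $\covol(L_N)=|\symp(a,b)|/N^2$. It contains $\Lzero$, since $a=N\cdot\tfrac aN$ and $b=N\cdot\tfrac bN$, so it is finer than $\Lzero$. Membership of the four points of $\Lambda$ is then direct. The point $0$ lies in every lattice, $a$ and $b$ lie in $L_N$ as just shown, and
\[
\nu=(Nr)\tfrac aN+(Ns)\tfrac bN
\]
with $Nr,Ns\in\Z$.

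\textbf{Step 3 (apply Linnell).} We have $\Lambda\subseteq L_N$. Linnell's theorem, in the form ``finite subsets of full-rank lattices satisfy HRT'', therefore gives linear independence of $\{\pi(\lambda)f\}_{\lambda\in\Lambda}$ for every nonzero $f$. If Linnell's result is available only for lattices of a normalized form such as $A\Z^2$ with $A\in GL_2(\R)$, then $L_N=A_N\Z^2$ with $A_N$ the matrix whose columns are $a/N$ and $b/N$, so it fits directly.

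\textbf{Degenerate case.} The only subtlety is that $\Lambda$ may have fewer than four distinct points, namely when $\nu\in\{0,a,b\}$, i.e.\ $(r,s)\in\{(0,0),(1,0),(0,1)\}$. In that case the family $\{\pi(\lambda)f:\lambda\in\Lambda\}$ is read as a set indexed by the distinct points, and the conclusion still follows from Linnell's theorem. Beyond this bookkeeping of distinctness, I expect no real obstacle: no covolume hypothesis is needed, and Linnell's theorem is used as a black box.
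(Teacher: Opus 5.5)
Your proposal is correct and follows essentially the same route as the paper: clear denominators with $N$, place $\Lambda$ inside the full-rank lattice $\frac{1}{N}\Z a+\frac{1}{N}\Z b$, and invoke Linnell's theorem. Your remarks on the covolume $|\symp(a,b)|/N^2$ and on the degenerate cases $\nu\in\{0,a,b\}$ are accurate additions that the paper leaves implicit.
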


\begin{proof}
Choose $N\in\N$ such that $Nr,Ns\in\Z$. Then
\[
\nu\in \frac{1}{N}\Z a+\frac{1}{N}\Z b.
\]
Since $0$, $a$, and $b$ also lie in the same lattice, we have
\[
\Lambda\subset \frac{1}{N}\Z a+\frac{1}{N}\Z b.
\]
This is a full-rank lattice because $a$ and $b$ are linearly independent. Linnell's theorem therefore implies that $\{\pi(\lambda)f:\lambda\in\Lambda\}$ is linearly independent for every nonzero $f\in L^2(\R)$ \cite{Linnell}.
\end{proof}

The formal Lean name of Theorem~\ref{thm:rational} is \lean{hrt_finite_relative_orbit}.

\appendix

\section{Lean certification ledger}
\label{app:ledger}

This appendix records the exact certification status of the formal package associated with the present paper.

\subsection{Certified files}

\begin{center}
\begin{tabular}{>{\raggedright\arraybackslash}p{0.34\textwidth} >{\centering\arraybackslash}p{0.09\textwidth} >{\raggedright\arraybackslash}p{0.47\textwidth}}
\toprule
File & Status & Description \\
\midrule
\lean{HRT/PhaseSpace.lean} & \cmark & Phase space $\R^2$, symplectic form, integer lattices, and rational-independence infrastructure. \\
\lean{HRT/CyclicSubspace.lean} & \cmark & Lattice-cyclic subspace lemmas and invariance properties used in the proof of Theorem~\ref{thm:main}. \\
\lean{HRT/DenseLargeCovolume.lean} & \cmark & Formal statement and proof of the dense large-covolume theorem. \\
\lean{HRT/FiniteOrbit.lean} & \cmark & Formal statement and proof of the rational-coordinate theorem. \\
\lean{HRT/BekkaReduction.lean} & \cmark & Formal reduction statements recorded in a separate file. \\
\lean{HRT/CertificationReport.md} & \cmark & Project audit report for the certified package. \\
\lean{HRT/ImportedFacts.lean} & \imark & Context object collecting the standard analytic input used by the formal proofs. \\
\bottomrule
\end{tabular}
\end{center}

\subsection{Certified results}

\begin{center}
\begin{tabular}{>{\raggedright\arraybackslash}p{0.47\textwidth} >{\centering\arraybackslash}p{0.09\textwidth} >{\raggedright\arraybackslash}p{0.30\textwidth}}
\toprule
Result & Status & Lean name \\
\midrule
Main dense large-covolume theorem & \cmark & \lean{hrt_dense_large_covolume} \\
Finite-family linear-independence formulation & \cmark & \lean{hrt_dense_large_covolume_lindep} \\
Rational-coordinate theorem & \cmark & \lean{hrt_finite_relative_orbit} \\
Lattice invariance of the cyclic subspace & \cmark & \lean{lattice_cyclic_subspace_invariant_under_lattice} \\
Membership of $\pi(\nu)f$ in the cyclic subspace from a dependence relation & \cmark & \lean{dependence_implies_off_lattice_orbit_vector_mem} \\
Preservation of the cyclic subspace by $\pi(\nu)$ & \cmark & \lean{off_lattice_vector_mem_implies_operator_preserves_cyclic_subspace} \\
Semigroup invariance for $\Lzero+\N\nu$ & \cmark & \lean{semigroup_preserves_from_lattice_and_nu} \\
Density upgrade from the forward semigroup to all of phase space & \cmark & \lean{dense_semigroup_preserves_all_phase_space} \\
Irreducibility step & \cmark & \lean{irreducible_forces_top} \\
Large-covolume contradiction & \cmark & \lean{large_covolume_contradiction} \\
\bottomrule
\end{tabular}
\end{center}

\subsection{Named analytic input used by the formal development}

The formal proofs are carried out in the context \lean{HRTContext}, which explicitly isolates the following analytic ingredients.

\begin{center}
\begin{tabular}{>{\raggedright\arraybackslash}p{0.11\textwidth} >{\centering\arraybackslash}p{0.09\textwidth} >{\raggedright\arraybackslash}p{0.67\textwidth}}
\toprule
Item & Status & Content \\
\midrule
(A1) & \imark & Projective commutation and multiplicativity of time-frequency shifts. \\
(A2) & \imark & Strong continuity of the Schr\"odinger representation. \\
(A3) & \imark & Irreducibility of the Schr\"odinger representation. \\
(A4) & \imark & Three-point HRT theorem. \\
(A5) & \imark & Incompleteness of lattice Gabor systems when the covolume is greater than $1$. \\
(A6) & \imark & Density of the forward semigroup $\Lzero+\N\nu$ under the rational-independence hypothesis on $1,r,s$. \\
(A7) & \imark & Linnell's lattice theorem. \\
(A8) & \imark & Metaplectic covariance of the HRT property. \\
\bottomrule
\end{tabular}
\end{center}

In other words, the results listed above are fully certified as formal consequences of this explicitly named analytic context. The accompanying certification report states that the Lean files compile without \lean{sorry} and use only the standard logical axioms \lean{propext}, \lean{Classical.choice}, and \lean{Quot.sound}, together with the analytic inputs listed above.

\subsection{Companion certification materials}

The certification archive referenced in the Aristotle summary is available at
\medskip
\noindent\url{https://www.dropbox.com/scl/fi/qggum8ag2w6kxqbodx06v/HRT_3_1.gz?rlkey=mmvf272e63ibux8623dogxevb&dl=0}.\medskip

\noindent This link is included only as a pointer to the companion formal files.

\subsection{Brief note on scope}

The present paper proves and records the two certified theorems above. A separate normalization leading to a Bekka-type fiber equation is stored in the formal project, but it is not used in the proof of either theorem stated in the body of the paper.


\begin{thebibliography}{99}

\bibitem{BekkaDriutti}
M. B. Bekka and P. Driutti,
\emph{Restrictions of irreducible unitary representations of nilpotent Lie groups to lattices},
J. Funct. Anal. \textbf{168} (1999), no.~2, 514--528.

\bibitem{Grochenig}
K. Gr\"ochenig,
\emph{Foundations of Time-Frequency Analysis},
Applied and Numerical Harmonic Analysis, Birkh\"auser, Boston, 2001.

\bibitem{HRT}
C. Heil, J. Ramanathan, and P. Topiwala,
\emph{Linear independence of time-frequency translates},
Proc. Amer. Math. Soc. \textbf{124} (1996), no.~9, 2787--2795.

\bibitem{Linnell}
P. A. Linnell,
\emph{Von Neumann algebras and linear independence of translates},
Proc. Amer. Math. Soc. \textbf{127} (1999), no.~11, 3269--3277.

\bibitem{RamanathanSteger}
J. Ramanathan and T. Steger,
\emph{Incompleteness of sparse coherent states},
Appl. Comput. Harmon. Anal. \textbf{2} (1995), no.~2, 148--153.

\end{thebibliography}
\end{document}